\newtheorem{thm}{Theorem}[section]
\newtheorem{lemma}[thm]{Lemma}
\newtheorem{cor}[thm]{Corollary}
\numberwithin{equation}{subsection}
\newcommand{\R}{\mathbb{R}}
\newcommand{\Z}{\mathbb{Z}}
\newcommand{\Q}{\mathbb{Q}}
\newcommand{\N}{\mathbb{N}}
\newcommand{\bdry}{\partial}
\newcommand{\s}{\vskip.1in}
\newcommand{\n}{\noindent}
\newcommand{\be}{\begin{enumerate}}
\newcommand{\ee}{\end{enumerate}}
\begin{document}

\title{Stabilizing the monodromy of an open book decomposition}

\author{Vincent Colin}
\address{Universit\'e de Nantes, UMR 6629 du CNRS, 44322 Nantes, France}
\email{Vincent.Colin@math.univ-nantes.fr}

\author{Ko Honda}
\address{University of Southern California, Los Angeles, CA 90089}
\email{khonda@math.usc.edu} \urladdr{http://rcf.usc.edu/\char126
khonda}

\date{This version: June 5, 2007.}

\keywords{tight, contact structure, open book decomposition, fibered
link, mapping class group, stabilization.} \subjclass{Primary 57M50;
Secondary 53C15.}
\thanks{KH supported by an Alfred P.\ Sloan Fellowship and an NSF CAREER Award (DMS-0237386).}

\begin{abstract}
We prove that any mapping class on a compact oriented surface with
nonempty boundary can be made pseudo-Anosov and right-veering after
a sequence of positive stabilizations.
\end{abstract}

\maketitle

\section{Introduction}

Let $S$ be a compact oriented surface with nonempty, possibly
disconnected, boundary $\bdry S$, and let $h:S\rightarrow S$ be a
diffeomorphism for which $h|_{\bdry S}=id$. By a theorem of
Giroux~\cite{Gi}, there is a 1-1 correspondence between isomorphism
classes of contact structures on closed 3-manifolds and equivalence
classes of pairs $(S,h)$, up to {\em positive stabilization} and
{\em conjugation}. The pair $(S,h)$ can be interpreted as an {\em
open book decomposition} of some 3-manifold $M$, as will be
explained later on in the introduction.

A {\it positive stabilization} of a pair $(S,h)$ is defined as
follows. Let $S'$ be the oriented union of the surface $S$ and a
band $B$ attached along the boundary of $S$, i.e., $S'$ is obtained
from $S$ by attaching a 1-handle along $\bdry S$.  Let $\gamma$ be a
simple closed curve in $S'$ which intersects the co-core of $B$ at
exactly one point and let $id_B\cup h$ be the extension of $h$ by
the identity map to $S'$. Then define $h'= R_\gamma\circ (id_B\cup
h)$, where $R_\gamma$ is a positive Dehn twist about $\gamma$. The
pair $(S',h')$ is called an {\em elementary positive stabilization}
of $(S,h)$. More generally, we say that $(S',h')$ is a {\em positive
stabilization} of $(S,h)$ if it is obtained from $(S,h)$ by a
sequence of elementary positive stabilizations. In this paper, a
{\em stabilization} will always mean ``positive stabilization''. By
{\em conjugation} we mean replacing $(S,h)$ by $(S, ghg^{-1})$,
where $g$ is a diffeomorphism of $S$ which is not necessarily the
identity on the boundary.

Next we describe the notion of a {\em right-veering} diffeomorphism
$h : S\to S$, introduced in \cite{HKM}.  Let $\alpha$ and $\beta$ be
properly embedded oriented arcs $[0,1]\rightarrow S$ with a common
initial point $x\in\bdry S$. Denote by $\pi :\tilde{S} \to S$ the
universal cover of $S$. We pick lifts $\tilde{\alpha}$ and
$\tilde{\beta}$ of $\alpha$ and $\beta$ in $\tilde{S}$, starting at
the same lift $\tilde{x}$ of $x$. The arc $\tilde{\alpha}$ divides
$\tilde{S}$ into two regions, one to the left and the other to the
right of $\tilde{\alpha}$; the left-right convention is such that at
$\tilde{x}$ the (tangential) orientation of $\partial \tilde{S}$
points towards the right-hand component of $\tilde{S}\setminus
\tilde \alpha$. We say that $\beta$ is {\em to the right of
$\alpha$} if $\tilde{\beta} (1)$ belongs to the region to the right
of $\tilde{\alpha}$ or equals $\tilde{\alpha} (1)$. (Observe that
this definition clearly does not depend on the choice of
representatives in the isotopy classes of $\alpha$ and $\beta$, rel
boundary.) Now, if $h$ is a diffeomorphism of a compact surface $S$
with $\partial S\neq \emptyset$ and $h\vert_{\partial S}=id$, we say
that $h$ is {\it right-veering} if for every properly embedded
oriented arc $\alpha$ in $S$, the arc $h(\alpha )$ is to the right
of $\alpha$. A typical example of a right-veering diffeomorphism is
a positive ($=$ right-handed) Dehn twist around a simple closed
curve in $S$.

The goal of this note is to prove the following:

\begin{thm}\label{thm:stabilization}
Let $S$ be a compact oriented surface with nonempty boundary and $h$
be a diffeomorphism of $S$ which is the identity on $\partial S$.
Then there exists a stabilization $(S',h')$ of $(S,h)$, where $\bdry
S'$ is connected and $h'$ is right-veering and freely homotopic to a
pseudo-Anosov homeomorphism.
\end{thm}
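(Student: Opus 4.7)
The plan is to realize the desired pair $(S',h')$ by a sequence of positive stabilizations organized into three stages.

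\emph{Stage 1: connected boundary.} If $\partial S$ has more than one component, I would perform an elementary stabilization whose band $B$ has its two feet on distinct components of $\partial S$; this fuses those components into one, reducing $|\pi_0(\partial S)|$ by one. After at most $|\pi_0(\partial S)|-1$ such stabilizations the boundary is connected. The resulting monodromy is a composition of the original $h$ (extended by the identity) with positive Dehn twists along the new stabilization curves.

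\emph{Stage 2: right-veering.} Fix a finite system of properly embedded arcs $\alpha_1,\dots,\alpha_m$ cutting the current $S'$ into a single disk. By the arc-system criterion in \cite{HKM}, $h'$ is right-veering iff $h'(\alpha_i)$ lies weakly to the right of $\alpha_i$ for every $i$. For each $\alpha_i$ in turn, I would stabilize with a curve $\gamma$ crossing $\alpha_i$ essentially; the appended positive Dehn twist $R_\gamma$ drags the image of $\alpha_i$ further to the right in $\tilde{S}'$, and iterating moves it arbitrarily far right. Since a positive Dehn twist is right-veering and composition preserves right-veering, once the finite list of arcs has become right-veering, the property persists under all subsequent positive stabilizations.

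\emph{Stage 3: pseudo-Anosov up to free homotopy.} By the Nielsen--Thurston classification, the free homotopy class of $h'$ is periodic, reducible, or pseudo-Anosov. I would eliminate the first two possibilities by further stabilizations. Periodicity is ruled out once $h'_*$ acts with infinite order on $H_1(S';\Z)$, which happens automatically once enough independent Dehn twists have been appended. Reducibility requires more care: for each candidate $h'$-invariant isotopy class of essential multicurve $C$, stabilize with a $\gamma$ chosen to intersect $C$ essentially, so that the new monodromy no longer preserves $[C]$. The organizing principle I would use is Masur--Minsky's characterization that $h'$ is pseudo-Anosov iff it has positive translation length on the curve complex $\mathcal{C}(S')$; one shows that an appropriately chosen stabilization strictly increases this translation length, so finitely many suffice. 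All stabilizations are positive, so the right-veering property from Stage 2 is preserved throughout.

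The main obstacle is Stage 3. Penner's and Thurston's classical constructions of pseudo-Anosovs require a mixture of positive \emph{and} negative Dehn twists, while stabilization supplies only positive ones; producing pseudo-Anosov dynamics from positive twists alone forces one to exploit the interaction between the pre-existing monodromy $h$ (extended to $S'$) and the newly appended $R_\gamma$. The delicate point is to choose each new $\gamma$ so as either to drive the translation length on $\mathcal{C}(S')$ off of zero, or to systematically ruin every potential reducing multicurve, while simultaneously preserving right-veering and connected boundary. Stages 1 and 2 are, by contrast, essentially bookkeeping built on the right-veering toolkit of \cite{HKM}.
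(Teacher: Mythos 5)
Your Stage 3 is the entire content of the theorem, and it is exactly the step you have not proved. ``For each candidate $h'$-invariant multicurve $C$, stabilize with a $\gamma$ intersecting $C$'' is not an argument: there are infinitely many isotopy classes of potential reducing multicurves, each stabilization changes the surface and can create new invariant multicurves, and you give no mechanism forcing the process to terminate. Likewise, ``an appropriately chosen stabilization strictly increases the translation length on $\mathcal{C}(S')$'' is unsubstantiated, and even if true, finitely many strict increases starting from translation length zero need not produce a positive (stable) translation length without a quantitative lower bound. The paper's route is different and is what makes the theorem work: Lemma~\ref{lemma:findgamma} (proved via minimal filling laminations and Klarreich's identification of $\partial_\infty\mathcal{C}(S)$ with $\mathcal{EL}(S)$) produces a single curve $\gamma_0$ with $d(\gamma_0,h(\gamma_0))=N\gg 0$, and then \emph{one} stabilization along the arc obtained from $\gamma_0$ is shown to have pseudo-Anosov monodromy. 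All invariant multicurves are ruled out simultaneously by an intersection-number bookkeeping with the co-core $a$ and the twist curve $\gamma'$, where Lemma~\ref{lemma:intersection} ($d\leq 2i+1$) converts the large curve-complex distance into large geometric intersection and yields a contradiction in every case ($\delta\subset S$, $\delta\not\subset S$ with negative or non-negative slopes); periodicity is excluded by iterating $h'$ on $\partial S$. Nothing in your proposal supplies a substitute for Lemma~\ref{lemma:findgamma} or for this simultaneous treatment of all multicurves, so the proof has a genuine gap at its core.

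There is also a structural problem with your ordering. After Stage 1 the boundary is connected, but every subsequent stabilization in Stages 2--3 whose attaching arc has both endpoints on the single boundary component raises the number of boundary components back to two, so ``connected boundary'' is not preserved by later stabilizations. The paper has to confront exactly this: its Case 1 produces a pseudo-Anosov monodromy at the cost of creating two boundary components, and its Case 2 then chooses the reconnecting arc $\gamma$ very carefully (via the associated pair-of-pants curve $\alpha_\gamma$ and iterates of an auxiliary pseudo-Anosov $g$, so that $d(\alpha_\gamma,h(\alpha_\gamma))\gg 0$) in order that the final, boundary-connecting stabilization is itself pseudo-Anosov. Your plan would need an analogous argument for the last stabilization, which again reduces to the missing Stage 3 mechanism. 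Your Stage 2 is harmless in outline --- the paper simply invokes Proposition~6.1 of \cite{HKM} and the observation that right-veering persists under positive stabilization --- though I would not rely without proof on checking right-veering only on a finite cutting arc system.
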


Let us briefly recall the $3$-dimensional context for
Theorem~\ref{thm:stabilization}, which is our primary motivation. An
oriented, positive {\it contact structure} on a closed oriented
$3$-manifold $M$ is a plane field $\xi$ given as the kernel of a
global $1$-form $\alpha$, whose exterior product with $d\alpha$ is a
volume form on $M$. An {\it open book decomposition} of $M$ is a
pair $(K,\theta )$, where $K$ is a link in $M$ and $\theta :
M\setminus K \to S^1$ is a fibration given by $(k,r,\phi )\mapsto
\phi$ in a neighborhood $N(K)\simeq \{ (k,r,\phi) \in S^1 \times
[0,1)\times S^1 \}$ ($(r,\phi)$ are polar coordinates) of $K\simeq
\{ k\in S^1 , r=0\}$. The link $K$ is called the {\it binding} and
the fibers of $\theta$ are called the {\it pages} of the open book.
Notice that the orientations given on $M$ and $S^1$ induce a
co-orientation of the pages and thus an orientation of $K$ as the
boundary of the closure of one page. An open book decomposition of
$M$ is completely determined, up to a diffeomorphism of $M$, by a
compact retraction $S$ of a  page, and a monodromy map $h:S\to S$
for $\theta$ which is the identity on $\partial S$. Conversely, any
pair $(S,h)$ gives rise to an open book decomposition of some
manifold, called the {\it relative suspension} of $(S,h)$. Moreover,
the open book decompositions given by $(S,h)$ and a stabilization
$(S',h')$ of $(S,h)$ are conjugated by a diffeomorphism which
induces the natural inclusion $i : S\to S'$ on one page.

The link between contact structures and open books can be expressed
via the following definition, introduced by Giroux. A contact
structure $\xi$ is {\it carried} by an open book $(K,\theta )$ if
there exists a $1$-form $\alpha$ with kernel $\xi$, such that
$d\alpha$ gives an area form on the pages and $\alpha$ a length form
on $K$. The main theorem of Giroux~\cite{Gi} states that every
contact structure $\xi$ on a closed oriented $3$-manifold $M$ is
carried by an open book, and that two open books of $M$ carry
isotopic contact structures if and only if they have isotopic
stabilizations. We then have the following immediate translation of
Theorem~\ref{thm:stabilization}:

\begin{cor}
On a closed oriented $3$-manifold $M$, every oriented, positive
contact structure is carried by an open book whose binding is
connected, and whose monodromy is right-veering and freely homotopic
to a pseudo-Anosov homeomorphism.
\end{cor}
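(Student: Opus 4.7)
The plan is to combine Giroux's correspondence theorem with Theorem~\ref{thm:stabilization} in the most direct way possible; essentially the corollary is a translation of the main theorem into the contact-geometric language already set up in the introduction.

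First, I would start with an arbitrary oriented positive contact structure $\xi$ on $M$. By Giroux's theorem (cited in the introduction), there exists an open book decomposition $(K_0,\theta_0)$ of $M$ that carries $\xi$. Let $(S_0,h_0)$ be the associated page/monodromy pair, so that $S_0$ is a compact oriented surface with nonempty boundary and $h_0|_{\partial S_0}=\mathrm{id}$.

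Next, I would apply Theorem~\ref{thm:stabilization} to $(S_0,h_0)$, producing a positive stabilization $(S',h')$ such that $\partial S'$ is connected and $h'$ is right-veering and freely homotopic to a pseudo-Anosov homeomorphism. Let $(K',\theta')$ denote the open book decomposition of $M$ whose relative suspension is $(S',h')$; its binding is $K'=\partial S'$, which is connected by construction.

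Finally, I would invoke the second half of Giroux's theorem: two open books on $M$ that are related by positive stabilization carry isotopic contact structures. Since $(S',h')$ is a stabilization of $(S_0,h_0)$, the open book $(K',\theta')$ carries a contact structure isotopic to $\xi$. Thus $\xi$ is carried by an open book with connected binding $K'$ and right-veering, freely-pseudo-Anosov monodromy $h'$, as required. The only nontrivial input is Theorem~\ref{thm:stabilization} itself; every other step is a direct appeal to Giroux's theorem, so there is no real obstacle in the proof of the corollary beyond citing the correct results.
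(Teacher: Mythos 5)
Your proposal is correct and is exactly the argument the paper intends: the corollary is stated there as an ``immediate translation'' of Theorem~\ref{thm:stabilization} via Giroux's correspondence, and your three steps (find a carrying open book, stabilize using the theorem, note that stabilization preserves the carried contact structure up to isotopy) fill in that translation in the same way. No gaps.
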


Although the primary motivation of this paper is $3$-dimensional,
our perspective, as well as the proof of
Theorem~\ref{thm:stabilization}, will mostly be 2-dimensional.

\section{The curve complex of $S$}

The proof of Theorem~\ref{thm:stabilization} uses distances in the
{\em curve complex} $\mathcal{C}(S)$, introduced by
Harvey~\cite{Har}, as well as its hyperbolicity properties,
due to Masur and Minsky~\cite{MM}, in an essential way. In this section we briefly
review the necessary background on the 1-skeleton of the curve
complex $\mathcal{C}(S)$.  We assume the reader is familiar with the
basics of the Nielsen-Thurston theory of surface homeomorphisms.
(See, for example,  \cite{Bo, CB, FLP, Th}.)

Suppose the genus $g(S)>1$. Since we are free to stabilize in
Theorem~\ref{thm:stabilization}, this is not a serious restriction.
(When $g(S)=1$ and $\bdry S$ is connected, the definition of
$\mathcal{C}(S)$ is slightly different.) The vertices of
$\mathcal{C}(S)$ are isotopy classes of non-peripheral (i.e., not
isotopic to a component of $\bdry S$) simple closed curves on $S$.
There is an edge of length one connecting each pair
$\{\alpha,\beta\}$ of vertices, if $\alpha$ and $\beta$ are distinct
isotopy classes which can be realized disjointly, i.e., have
geometric intersection $i(\alpha,\beta)=0$. Denote the $k$-skeleton
of the curve complex by $\mathcal{C}_k(S)$. The 1-skeleton
$\mathcal{C}_1(S)$ is a geodesic metric space, and the distance
between $\alpha,\beta\in \mathcal{C}_0(S)$ is denoted by
$d_{\mathcal{C}(S)}(\alpha,\beta)$, or simply by $d(\alpha,\beta)$.
(We have no need for the higher-dimensional simplices of
$\mathcal{C}(S)$ in this paper.)

The following useful facts can be found in \cite{MM}:

\begin{lemma} \label{lemma:intersection}

\be
\item If $\alpha, \beta\in \mathcal{C}_0(S)$, then
$d(\alpha,\beta)\leq 2i(\alpha,\beta)+1$.

\item If $d(\alpha,\beta)\geq 3$, then $\alpha$ and $\beta$ {\em
fill} $S$, i.e., any non-peripheral simple closed curve $\gamma$
must intersect $\alpha$ or $\beta$.

\ee
\end{lemma}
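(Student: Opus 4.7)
The plan is to prove the two statements independently: part (1) by induction on the geometric intersection number via a standard surgery, and part (2) as a direct consequence of the triangle inequality in the $1$-skeleton $\mathcal{C}_1(S)$.

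For (1), I will induct on $n=i(\alpha,\beta)$. The base case $n=0$ is immediate: $\alpha$ and $\beta$ are either isotopic or disjointly representable, giving $d(\alpha,\beta)\le 1 = 2\cdot 0+1$. For $n\ge 1$, I will place $\alpha,\beta$ in minimal position and pick two points $p,q\in\alpha\cap\beta$ that are consecutive along $\beta$, so the subarc $\beta_{pq}\subset\beta$ meets $\alpha$ only at its endpoints. These points cut $\alpha$ into subarcs $\alpha^{1},\alpha^{2}$, and from each I form the simple closed curve $\gamma^{i}=\alpha^{i}\cup\beta_{pq}$, smoothed at the corners and pushed slightly off $\alpha$ so as to be literally disjoint from $\alpha$. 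Its geometric intersection with $\beta$ is bounded above by the number of interior points of $\alpha^{i}\cap\beta$, and since the two such counts sum to $n-2$, at least one of $\gamma^{1},\gamma^{2}$, call it $\gamma$, satisfies $i(\gamma,\beta)\le \lfloor(n-2)/2\rfloor < n$. The inductive hypothesis then gives $d(\gamma,\beta)\le 2(n-2)+1$, and combined with $d(\alpha,\gamma)\le 1$ the triangle inequality yields $d(\alpha,\beta)\le 2n-2 < 2n+1$.

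For (2), I will argue by contrapositive. Suppose $\alpha$ and $\beta$ do not fill $S$, so there exists a non-peripheral simple closed curve $\gamma$ disjoint from each of $\alpha$ and $\beta$. Then $d(\alpha,\gamma)\le 1$ and $d(\gamma,\beta)\le 1$ in $\mathcal{C}(S)$, and the triangle inequality forces $d(\alpha,\beta)\le 2$, contrary to the assumption $d(\alpha,\beta)\ge 3$.

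The main obstacle lies in step (1): one must verify that the surgered curve $\gamma$ is a genuine vertex of $\mathcal{C}(S)$, i.e.\ essential and non-peripheral. Essentiality is automatic from minimal position, since a nullhomotopic $\gamma$ would force $\alpha^{i}\cup\beta_{pq}$ to bound a disk and hence produce a bigon between $\alpha$ and $\beta$. Non-peripherality is more delicate: should the first choice of $\gamma$ turn out to be boundary-parallel, one replaces it either with the other arc $\alpha^{j}$ or with a curve arising from a different consecutive pair, and under the standing hypothesis $g(S)>1$ some such choice must produce a non-peripheral curve, for otherwise every complementary component of $\alpha\cup\beta$ would be a disk or an annular collar of $\partial S$, forcing the genus of $S$ to be too small.
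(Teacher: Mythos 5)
The paper offers no proof of this lemma at all — it simply quotes both facts from Masur--Minsky — so your write-up has to stand on its own. Part (2) does: the contrapositive via the triangle inequality in $\mathcal{C}_1(S)$ is correct and is the only argument there is. Part (1) follows the right general strategy (surgery along an arc of $\beta$, induction on $n=i(\alpha,\beta)$), but as written it has genuine gaps, not just loose ends.

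First, the inductive step needs two distinct intersection points consecutive along $\beta$, so it only operates for $n\geq 2$; the case $n=1$, which both the lemma and the induction itself require, is not covered and needs a separate argument (the boundary of a regular neighborhood of $\alpha\cup\beta$ is a one-holed torus whose boundary curve is essential and non-peripheral because $g(S)>1$). Second, and more seriously, the curve $\alpha^{i}\cup\beta_{pq}$ can be pushed off $\alpha$ only when $\beta$ crosses $\alpha$ at $p$ and $q$ with opposite signs, i.e.\ when $\beta_{pq}$ leaves and returns on the same side of $\alpha$; if the two crossings have the same sign, then for either choice of $\alpha^{i}$ the smoothed curve has one essential intersection with $\alpha$, so $d(\alpha,\gamma)\leq 1$ fails. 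Whenever the geometric and algebraic intersection numbers of $\alpha$ and $\beta$ agree (which happens for many pairs with $n\geq 2$), every consecutive pair is of this bad type, so no choice of $p,q$ rescues your construction; the standard remedy is a case division as in the connectivity proof for $\mathcal{C}(S)$, paying distance $2$ in the same-sign case via a one-holed-torus neighborhood — this is exactly what the constant in $2i(\alpha,\beta)+1$ is built to absorb. Third, the non-peripherality patch is not a proof: both surgered curves at a given pair can be peripheral even on a surface of large genus — for instance when $\alpha$ cuts off a pair of pants containing two components of $\partial S$ and $\beta_{pq}$ is the essential arc in that pair of pants separating them — and your fallback reasoning, that failure of all choices would make every complementary region of $\alpha\cup\beta$ a disk or annular collar and hence force small genus, is a non sequitur: that conclusion is just the statement that $\alpha$ and $\beta$ fill $S$, which occurs on surfaces of every genus. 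You need an actual argument that some consecutive pair yields an essential, non-peripheral curve (for example by choosing the surgery arc on the appropriate side of $\alpha$), or a different surgery altogether. (The possible extra corner crossing with $\beta$, giving $i(\gamma,\beta)\leq n-1$ rather than $n-2$, is harmless for the arithmetic, so I would not count that as a gap.)
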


We learned the following from Yair Minsky, at least for the case
when $h$ is pseudo-Anosov:

\begin{lemma}\label{lemma:findgamma}
For any $h\in Map(S,\bdry S)$ which is not freely isotopic to a
periodic diffeomorphism, there exists $\alpha\in \mathcal{C}_0(S)$
such that $d(\alpha,h(\alpha))$ is arbitrarily large.
\end{lemma}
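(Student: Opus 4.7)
By the Nielsen--Thurston classification, the hypothesis that $h$ is not freely isotopic to a periodic map means that either $h$ itself is pseudo-Anosov, or $h$ is reducible and has at least one ``essential'' non-periodic feature on the canonical reducing multicurve. I would treat these two cases separately.

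\emph{Pseudo-Anosov case} (the one Minsky communicated). By Masur--Minsky, $\mathcal{C}_1(S)$ is $\delta$-hyperbolic and $h$ acts on it as a loxodromic isometry, with a quasi-axis $A$ and positive stable translation length $\tau(h) > 0$. Standard $\delta$-hyperbolic geometry then gives
\[
d(\alpha, h(\alpha)) \ \geq\ 2\, d(\alpha, A) + \tau(h) - C(\delta)
\]
whenever $\alpha$ lies far enough from $A$. Since $\mathcal{C}_1(S)$ has infinite diameter (as witnessed already by any loxodromic orbit), I would choose curves $\alpha_n$ with $d(\alpha_n, A) \to \infty$; for these, $d(\alpha_n, h\alpha_n) \to \infty$.

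\emph{Reducible case.} Let $\sigma = \sigma_1 \cup \cdots \cup \sigma_k$ be the Nielsen--Thurston canonical reducing multicurve; $h$ preserves $\sigma$ and restricts on each complementary component to either a pseudo-Anosov or a periodic map. Not being freely periodic forces one of: (i) some component $\Sigma$ of $S \setminus \sigma$ carries a pseudo-Anosov piece $h|_\Sigma$, or (ii) all pieces are periodic but $h$ executes a non-zero Dehn twist about some non-boundary-parallel component $\sigma_0 \in \sigma$. I would exploit this with a Masur--Minsky subsurface projection $\pi_\Sigma$ --- onto the pseudo-Anosov subsurface in (i), onto the annulus around $\sigma_0$ in (ii) --- together with the Masur--Minsky bounded geodesic image theorem. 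In case (i), applying the pseudo-Anosov case inside $\Sigma$ supplies curves with $d_\Sigma(\alpha, h\alpha)$ arbitrarily large; bounded geodesic image then forces any $\mathcal{C}(S)$-geodesic from $\alpha$ to $h\alpha$ to enter the $1$-neighborhood of $\partial \Sigma$, and since $h$ preserves $\partial \Sigma$ setwise one obtains
\[
d_S(\alpha, h\alpha) \ \geq\ 2\, d_S(\alpha, \partial \Sigma) - O(1),
\]
which is made arbitrarily large by choosing $\alpha$ far from $\partial \Sigma$ in $\mathcal{C}(S)$. Case (ii) is similar but requires iteration: $\pi_{\sigma_0}$ is shifted by only a bounded amount under one application of $h$, so I would run the same argument for a fixed high power $h^N$ (large enough that $d_{\sigma_0}(\alpha, h^N\alpha)$ exceeds the bounded-geodesic-image threshold) and then recover a bound on $d_S(\alpha, h\alpha)$ from the triangle inequality $d_S(\alpha, h^N\alpha) \leq N \cdot d_S(\alpha, h\alpha)$.

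The main obstacle is the reducible case, and within it the step of producing curves $\alpha$ that \emph{simultaneously} realize the prescribed large subsurface-projection behaviour and lie far from $\partial \Sigma$ (or $\sigma_0$) in $\mathcal{C}(S)$. One would build them by combining a base curve with controlled projection to $\Sigma$ with a pseudo-Anosov of $S$ applied to push the result away from $\partial \Sigma$, invoking the coarse surjectivity and coarse Lipschitz properties of $\pi_\Sigma$. The pseudo-Anosov case, once Masur--Minsky loxodromicity is granted, is essentially just $\delta$-hyperbolic geometry.
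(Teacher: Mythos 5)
In the pseudo-Anosov case your argument is close to working, but the key existential step is not justified: you need vertices $\alpha_n$ with $d(\alpha_n,A)\to\infty$, and ``infinite diameter, as witnessed by any loxodromic orbit'' does not give this --- the orbit you cite lies in a bounded neighborhood of the quasi-axis $A$, so it shows only that $A$ itself is unbounded, not that $\mathcal{C}(S)$ leaves every bounded neighborhood of $A$. That latter fact is true, but it needs an input of real weight (e.g.\ a second pseudo-Anosov whose axis is independent of $A$, or the fact that $\partial_\infty\mathcal{C}(S)$ has more than two points, which already invokes Klarreich). Also, the displayed estimate $d(\alpha,h\alpha)\geq 2d(\alpha,A)+\tau(h)-C(\delta)$ is only valid once the projection of $\{\alpha,h\alpha\}$ to $A$ is displaced by more than a $\delta$-dependent threshold; since $\tau(h)$ may be small compared to $\delta$, you should run the estimate for a fixed power $h^n$ with $n\tau(h)$ above the threshold and then divide by $n$. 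These are fixable, but as written they are gaps.

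The genuine gap is the one you flag yourself in the reducible case: producing $\alpha$ with $d_\Sigma(\alpha,h\alpha)$ above the bounded-geodesic-image constant \emph{and} $d_S(\alpha,\partial\Sigma)$ arbitrarily large. The repair you sketch --- push a base curve away from $\partial\Sigma$ by a pseudo-Anosov $f$ of the whole surface $S$ and invoke coarse surjectivity/Lipschitz properties of $\pi_\Sigma$ --- does not work as stated: as $n\to\infty$ the projections $\pi_\Sigma(f^n\beta)$ converge to the (bounded) projection of the attracting lamination of $f$, so $d_\Sigma(f^n\beta,hf^n\beta)$ stabilizes at a constant that need not exceed the threshold; applying $f$ buys you distance from $\partial\Sigma$ while destroying all control of the subsurface displacement. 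A correct repair is to adjust the projection with mapping classes $g$ supported in $\Sigma$: since $g(\partial\Sigma)=\partial\Sigma$, one has $d_S(g^k\alpha,\partial\Sigma)=d_S(\alpha,\partial\Sigma)$, and choosing $g$ pseudo-Anosov on $\Sigma$ and independent of $h|_\Sigma$ makes $d_\Sigma(g^k\alpha,hg^k\alpha)$ large --- but this is precisely the missing argument, not a routine remark, and the annulus case (ii) needs the analogous care. For comparison, the paper takes a lighter route that treats both cases uniformly: it produces a minimal filling geodesic lamination $\mu$ with $h(\mu)\neq\mu$ (this is where reducibility is handled, by a direct intersection argument on the complementary pieces), approximates $\mu$ by simple closed curves $\alpha_n$, and concludes $d(\alpha_n,h(\alpha_n))\to\infty$ from Klarreich's identification of $\partial_\infty\mathcal{C}(S)$ with the space of ending laminations, or alternatively by an elementary Kobayashi/Hempel-style subsequence argument --- no subsurface projections or bounded geodesic image theorem are needed.
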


\begin{proof}
Fix a reference hyperbolic metric on $S$ with geodesic boundary. A
geodesic lamination $\mu$ on $S$ is {\em minimal} if it does not
contain any proper sublamination. In particular, $\mu$ does not have
any closed leaves, and components of $\bdry S$ cannot be leaves of
$\mu$. The lamination $\mu$ is {\em filling} if each component of
$S-\mu$ (called a {\em complementary region}) is an ideal polygon or
a ``once-punctured ideal polygon''. (Strictly speaking, the latter
is a half-open annulus, whose boundary component is a component of
$\bdry S$.)  Let $\mathcal{EL}(S)$ be the set of minimal filling
laminations, viewed as a subset of
$\mathcal{ML}(S)/\mbox{measures}$, where $\mathcal{ML}(S)$ is the
space of measured geodesic laminations. (A {\em measured geodesic
lamination} is defined to be a compact geodesic lamination
$\lambda$, which is endowed with a transverse measure whose support
is {\em all of} $\lambda$.  In particular, $\lambda$ has no infinite
isolated leaf.  An example of such a leaf is a diagonal $l$ of an
ideal $n$-gon complementary region of a lamination $\lambda$, with
$n\geq 4$.)

We claim that there is a minimal filling geodesic lamination
$\mu\subset S$ so that $h(\mu)\not=\mu$.
If $h$ is (freely homotopic to) a pseudo-Anosov homeomorphism, then
we simply pick a minimal filling $\mu$ which is not the stable
lamination or the unstable lamination of $h$. If $h$ is reducible,
then, after taking a sufficiently large power $h^n$ of $h$, we may
assume that there is a collection of disjoint homotopically
nontrivial annuli $A_1,\dots,A_m$ so that $h^n$ on each component
$S_j$ of $S-\cup_i A_i$ is either the identity or pseudo-Anosov, and
$h^n$ on $A_i$ is $R_{\gamma_i}^{n_i}$, $n_i\in \Z$, where
$\gamma_i$ is the core curve of $A_i$.  In this case, any minimal
filling lamination $\mu$ with ideal 3-gon and once-punctured ideal
monogon complementary regions would work, as we explain in the next
two paragraphs.

Suppose there is a pseudo-Anosov piece $S_j$. The restriction of
$\mu$ to $S_j$ consists of a finite number of disjoint non-parallel
arc types which cut up $S_j$ into 3-gons and once-punctured
monogons. By a slight abuse of notation, let $\mu|_{S_j}$ denote the
union of arcs, one from each arc type. We then claim that the
minimum geometric intersection $i(h(\mu)|_{S_j},\mu|_{S_j})\not=0$,
where the endpoints of the arcs are free to move around $\bdry S_j$.
Suppose $i(h(\mu)|_{S_j},\mu|_{S_j})=0$. Since $S_j-\mu|_{S_j}$
consists of 3-gons and once-punctured monogons, it follows that
$\mu|_{S_j}=h(\mu)|_{S_j}$, and $h$ must be periodic on $S_j$, a
contradiction. Thus, $\mu$ and $h(\mu)$ will have nontrivial
intersection, and are distinct.

If there are no pseudo-Anosov components, then $h^n$ consists of
disjoint (high multiples of) Dehn twists.  One can also verify that
$\mu$ and $h(\mu)$ nontrivially intersect in this case. Indeed, let
$\overline{S}_j$ be the union of $S_j$ and all its adjacent annuli
$A_i$.  For simplicity, assume that no $A_i$ bounds $S_j$ along both
boundary components.  Let $\delta$ be a properly embedded arc of
$\overline{S}_j$ from $A_i$ to another $A_{i'}$, and let $\delta'$
be a parallel push-off of $\delta$.  Then $h(\delta')$ and $\delta$
intersect efficiently if $n_i$ and $n_{i'}$ have the same sign; if
they have opposite signs, $h(\delta')$ and $\delta$ intersect
efficiently for a push-off to one side and have two extraneous
intersections for a push-off to the other side.  In either case,
$\delta$ and $h(\delta')$ intersect nontrivially, provided
$|n_i|+|n_{i'}|\gg 0$.  The same argument holds for $\mu$.

Next, for any $\mu$ which is minimal and filling, we claim there
exists a sequence of (connected) simple closed curves which
converges to $\mu$. Pick a point $p\in \mu$ and an arbitrarily short
transversal $\delta$ to $\mu$ so that $p\in int(\delta)$. Take a
neighborhood $N(p)$ of $p$ of the form $[-1,1]\times [-1,1]$, where
leaves of $\mu\cap N(p)$ are $[-1,1]\times\{pt\}$,
$\delta=\{0\}\times [-1,1]$, and $p=(0,0)$. Let $\beta$ be an arc
which starts at $p$ and follows along $\mu$ in one direction, until
the first return to $\delta$. If $\mu$ is orientable, then we can
close up $\beta$ and ``stretch it tight'' to obtain a closed curve
which is arbitrarily close to $\mu$. If $\mu$ is not orientable,
then the arc $\beta$ could return to $\delta$ at $p_1$ from the
``same side'', i.e., the same component of
$N(p)-(\{0\}\times[-1,1])$, from which it left $p$. Next consider
the subarc $\delta_1\subset \delta=\delta_0$ which contains $p$ and
has $p_1$ as an endpoint. Continuing $\beta$ past $p_1$, let $p_2$
be the first return to $\delta_1$. If the approach is from the
``same side'' again, then we take a shorter $\delta_2\subset
\delta_1$, and continue.  If further closest returns to $\delta$ are
always from the ``same side'', then we can naturally orient $\mu$ at
the point $p$. If we can do this at every point on $\mu$, then $\mu$
would be orientable, a contradiction.

Masur and Minsky~\cite{MM} have shown that the curve complex
$\mathcal{C}(S)$ is hyperbolic in the sense of Gromov; hence it
makes sense to talk about its boundary at infinity $\bdry_\infty
\mathcal{C}(S)$. Now, according to a theorem of Klarreich~\cite{Kl},
there is a homeomorphism between $\mathcal{EL}(S)$ and $\bdry_\infty
\mathcal{C}(S)$. Moreover, a sequence $\beta_i\in \mathcal{C}_0(S)$
converges to $\beta\in\bdry_\infty\mathcal{C}(S)$ if and only if it
converges to $\beta\in \mathcal{EL}(S)$ in the topology of
$\mathcal{ML}(S)/\mbox{measures}$.  (Klarreich's theorem, in turn,
relies on the results of Masur-Minsky \cite{MM}.) Let $\mu$ be a
minimal filling geodesic lamination so that $h(\mu)\not= \mu$, and
let $\alpha_n$, $n=1,2,\dots$, be a sequence of simple closed curves
which converges to $\mu$ in the topology of
$\mathcal{ML}(S)/\mbox{measures}$.  Clearly $h(\alpha_n)\rightarrow
h(\mu)$. Since $\mu$ and $h(\mu)$ are distinct points on
$\bdry_\infty \mathcal{C}(S)$, it immediately follows that
$d(\alpha_n,h(\alpha_n))\rightarrow \infty$.

Here is a more direct proof, which the authors learned from Yair
Minsky. The technique apparently can be traced back to
Kobayashi~\cite{Ko}, and has been used by Hempel~\cite{He} and
Abrams-Schleimer \cite{AS} to study distances of Heegaard
splittings. Suppose $d(\alpha_i,h(\alpha_i))$ does not approach
$\infty$. Then, after passing to subsequences, we may assume that
$d(\alpha_i,h(\alpha_i)) = N$ for a constant $N$. Consider a
geodesic in the curve complex which connects $\alpha_i$ to
$h(\alpha_i)$ --- write it as $\alpha_i, A_{i,1}, A_{i,2},\dots,
A_{i,N}=h(\alpha_i)$.  Then $A_{i,1}$ is disjoint from $\alpha_i$,
and hence, after taking a subsequence, $A_{i,1}$ converges to a
lamination $\mu'$ which has zero geometric intersection with $\mu$.
Similarly, $A_{i,{N-1}}$ converges to a lamination $\nu$ which has
zero geometric intersection with $h(\mu)$. Since $\mu$ is minimal,
we may conclude that $\mu'=\mu$ and $\nu=h(\mu)$. We now have a new
pair of sequences converging to $\mu$ and $h(\mu)$, but at a shorter
distance. By repeating the procedure, we eventually conclude that
$\mu=h(\mu)$, a contradiction.
\end{proof}

\section{Proof of Theorem~\ref{thm:stabilization}}

Using the technique in (\cite{HKM}, Proposition~6.1), any monodromy
map $(S,h)$ may be stabilized so that it becomes right-veering. In
the proof, a four-times punctured sphere is attached onto a boundary
component of $\bdry S$.  This construction in \cite{HKM} makes the
monodromy map reducible, so we may additionally assume that $h$ is
not periodic. Observe that, once $(S,h)$ is right-veering,
stabilizations of it will remain right-veering.

Throughout the proof, $d$ will always mean $d_{\mathcal{C}(S)}$. We
will write $i_\Sigma(\cdot,\cdot)$ to indicate the geometric
intersection number on the surface $\Sigma$.

\s\n {\bf Case 1: $\bdry S$ is connected.}  Suppose that $\bdry S$
is connected. By Lemma~\ref{lemma:findgamma}, there exists a
non-peripheral $\gamma_0\in \mathcal{C}_0(S)$ satisfying
$d(h(\gamma_0),\gamma_0)=N\gg 0$. Let $\gamma$ be a properly
embedded arc in $S$ which becomes $\gamma_0$ after concatenating
with an arc of $\bdry S$ which connects the endpoints of $\gamma$.
Stabilize along the arc $\gamma$ to obtain $(S',h'=R_{\gamma'}\circ
h)$.  Here $\gamma'$ is the extension of $\gamma$ to $S'$ so that
$\gamma'$ intersects the co-core $a$ of the 1-handle at exactly one
point.  We will show that $h'$ is pseudo-Anosov.  Although $\bdry
S'$ will have two boundary components, this will be remedied in Case
2.

We first prove that $h'(\delta)\not=\delta$ for any multicurve
(i.e., closed embedded 1-manifold) $\delta$ of $S'$ without
peripheral components. This would imply that $h'$ is not reducible.
The basic idea of the proof is to distinguish $h'(\delta)$ and
$\delta$ by intersecting with the co-core $a$ and the closed curve
$\gamma'$.

First suppose that $\delta\subset S$, i.e., $\delta$ does not
intersect the co-core $a$. If $i_{S'}(h(\delta),\gamma')=m$, then we
claim that $i_{S'}(R_{\gamma'}\circ h(\delta),a)=m$. (Note that
$i_{S'}(\alpha,\gamma')=i_S(\alpha,\gamma)$ if $\alpha$ is a closed
curve on $S$.) Clearly, there is a representative $g$ of
$R_{\gamma'}\circ h(\delta)$ which intersects $a$ at $m$ points. If
$i_{S'}(R_{\gamma'}\circ h(\delta),a)<m$, then there must exist a
bigon consisting of a subarc of $a$ and an arc of $g$. It follows
(without too much difficulty) that there is a bigon consisting of an
arc of $\gamma$ and an arc of $h(\delta)$, a contradiction. Since
$i_{S'}(\delta,a)=0$ and $i_{S'}(h'(\delta),a)=m$, we have
$h'(\delta)\not=\delta$ if $m>0$. Observe that this case covers the
possibility that a component of $\delta$ is $\bdry S$. On the other
hand, if $m=0$, then $i_S(h(\alpha),\gamma_0)=0$ and
$d(h(\alpha),\gamma_0)=1$ for every component $\alpha$ of $\delta$.
Since $d(h(\gamma_0),\gamma_0)=N$, it follows that
$d(h(\alpha),h(\gamma_0))\approx N$ for every $\alpha$. (Here
$\approx$ means ``approximately equal to''.) Composing with
$h^{-1}$, we obtain $d(\alpha,\gamma_0)\approx N$. With the help of
Lemma~\ref{lemma:intersection}, we find that
$i_S(\alpha,\gamma_0)\gtrapprox {N-1\over 2}$.  Hence
$i_{S}(\alpha,\gamma)=i_{S'}(\alpha,\gamma')\gtrapprox {N-1\over 2}$
for each component $\alpha$ of $\delta$. Since $m=0$,
$R_{\gamma'}\circ h(\delta)=h(\delta)$ and
$i_{S'}(h'(\delta),\gamma')=0$. By comparing
$i_{S'}(\cdot,\gamma')$, we obtain $h'(\delta)\not = \delta$ for
$m=0$.

Next suppose that $\delta\not\subset S$.  Let $k=i_{S'}(\delta,a)$.
Write $B=S'-int(S)=[-1,1]\times[-1,1]$, so that $\{\pm 1\}\times
[-1,1]\subset \bdry S'$, $a=[-1,1]\times \{0\}$, and $\gamma'\cap
B=\{0\}\times[-1,1]$. We now explain how to normalize $\delta$.
Isotop $\delta$ so that it intersects $\gamma'$ and $a$ transversely
and efficiently, for example by realizing $\gamma'$, $a$, and
$\delta$ as geodesics.   Then subdivide $\delta$ into arcs
$\delta_1,\dots,\delta_k,\delta_1',\dots,\delta_k'$, where
$\delta_i\subset S$ and $\delta_i'\subset B$.  (The $\delta_i$ and
$\delta_i'$ are not ordered in any particular way.) The $\delta_i'$
are linear arcs in $B$, each with an endpoint on
$[-1,1]\times\{-1\}$ and on $[-1,1]\times\{1\}$. If $\delta_i'$ does
not intersect $\gamma'\cap B$, then we assume that $\delta_i'$ is
vertical, i.e., $\{pt\}\times[-1,1]$. Moreover, we may normalize
$\delta$ so that there is no triangle in $S$ whose boundary consists
of (i) a subarc of $\delta_i$, (ii) a subarc of $\gamma$, and (iii)
a subarc of $[-1,1]\times\{\pm 1\}$. If there is such a triangle,
then we can isotop $\delta$ and push the triangle into $B$.  Note
that if $\delta_i$ is parallel to $\gamma$, then the isotopy may not
be unique.  We similarly normalize $h(\delta)$ and subdivide
$h(\delta)$ into arcs
$(h(\delta))_1,\dots,(h(\delta))_k,(h(\delta))_1',\dots,(h(\delta))_k'$.

We claim that if any $\delta_i$ is a boundary-parallel arc in $S$,
then $\delta$ must have a component which is parallel to $\bdry S'$.
Indeed, $\delta_i$ either has endpoints on both components of
$[-1,1]\times\{\pm 1\}$ or begins and ends on a single component
(but does not form a bigon together with some arc of $[-1,1]\times
\{\pm 1\}$). In the former case, if the endpoints of $\delta_i$ are
not connected by a single $\delta_j'$, then $\delta$ will be
spiraling towards one component of $\bdry S'$, a contradiction.  In
the latter case, one of the two $\delta_{j_1}'$, $\delta_{j_2}'$
which begin at the endpoints of $\delta_i$ continues on to spiral
around one component of $\bdry S'$, also a contradiction. Since we
are assuming that $\delta$ has no peripheral components, it follows
that no $\delta_i$ is parallel to $\bdry S$.

Consider the case where some $(h(\delta))_i'$ has negative slope
$\not=-\infty$. Let $m>0$ be the number of arcs with negative slope.
The rest of the arcs $(h(\delta))_i'$ will have slope $\infty$. Also
let $n=\sum_{j=1}^k i_S((h(\delta))_j,\gamma)$, i.e., the number of
intersections between $h(\delta)$ and $\gamma$, away from $B$.
Figure~\ref{negative} depicts this situation.
\begin{figure}[ht]
\s
\begin{overpic}[height=7cm]{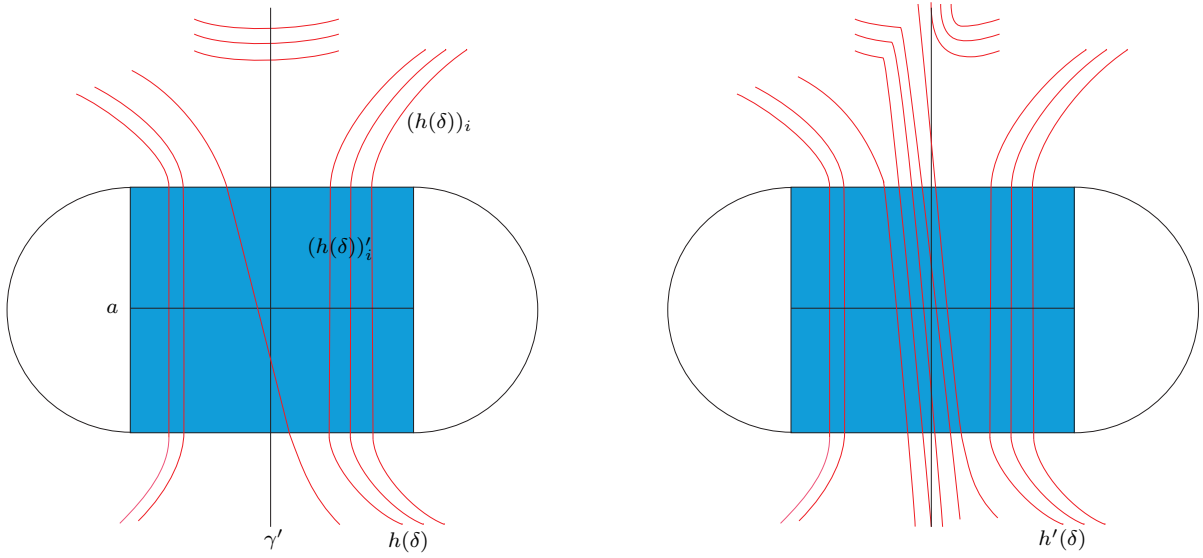}
\put(21.6,-1.5){\tiny $\gamma'$} \put(32,-1.6){\tiny $h(\delta)$}
\put(86.5,-1.5){\tiny $h'(\delta)$} \put(8.4,18){\tiny $a$}
\put(25.2,23){\tiny $(h(\delta))_i'$} \put(33.5,33.5){\tiny
$(h(\delta))_i$}
\end{overpic}
\s \caption{The case where some $(h(\delta))_i'$ have negative
slope. The shaded region is the $1$-handle $B$. In the picture,
$k=6$, $m=1$, and $n=3$ for $h(\delta)$.  The right-hand diagram
depicts the effect of $R_{\gamma'}$ on $h(\delta)$.}
\label{negative}
\end{figure}
We then claim that $i_{S'}(h'(\delta),a)=i_{S'}(R_{\gamma'}\circ
h(\delta),a)= k+ m +n$. Since $i_{S'}(\delta,a)=k$, this would show
that $\delta\not = h'(\delta)$. The representative of $h'(\delta)$
shown in Figure~\ref{negative} intersects $a$ at $k+m+n$ points, and
we need to show that the intersection is efficient.  In other words,
no subarc of $(h'(\delta))_i$ bounds a bigon together with a subarc
of $a$. To prove the claim, consider the subsurface $S''\subset S'$
which is the union of $B$ and a small neighborhood of $\gamma'$
containing the support of the Dehn twist $R_{\gamma'}$. The
intersection of our representative of $h'(\delta)$ with $S''$
consists of non-boundary-parallel arcs with the exception of
vertical arcs in $B$.  Observe that the restriction of $h'(\delta)$
to $S'-S''$ is the same as the restriction of $h(\delta)$ to
$S'-S''$. Hence, it suffices to prove that there is no
boundary-parallel component of $(S'-S'')\cap h(\delta)$ which
cobounds a bigon together with a subarc of $\bdry S''-\bdry S'$.
Such a boundary-parallel arc contradicts our normalization.

Next consider the case where no $(h(\delta))_i'$ has negative slope.
Let $m$ be the number of arcs $(h(\delta))_i'$ with positive slope
and let $n=\sum_j i_{S}((h(\delta))_j,\gamma)$, as before. We have
$i_{S'}(h'(\delta),a)=k-m+n$, which is precisely the number of
intersection points between our particular representative of
$h'(\delta)$ and $a$. (The proof is the same as in the previous
case.) If $m\not=n$, then
$i_{S'}(\delta,a)\not=i_{S'}(h'(\delta),a)$. It remains to consider
the case $m=n$. In this case, $i_{S'}(h'(\delta),\gamma')=
m+n=2m\leq 2k$.  We argue by contradiction that
$i_{S'}(\delta,\gamma')\not=i_{S'}(h'(\delta),\gamma')$. Suppose
$i_{S'}(\delta,\gamma')=i_{S'}(h'(\delta),\gamma')\leq 2k$. Since
$1\leq i\leq k$, this means that there is some $i$ for which
$i_S(\delta_i,\gamma)\leq {2k\over k}=2$. If we close up $\delta_i$
by concatenating with an arc of $\bdry S$ to obtain the simple
closed curve $\overline{\delta}_i\subset S$, then
$i_S(\overline{\delta}_i,\gamma_0)\leq 3$. (Recall that $\delta_i$
is not boundary-parallel on $S$, so $\overline{\delta}_i$ is neither
homotopically trivial nor parallel to $\bdry S$.) By
Lemma~\ref{lemma:intersection}, $d(\overline{\delta}_i,\gamma_0)\leq
7$.  For simplicity, we write
$d(\overline{\delta}_i,\gamma_0)\approx 0$. Since $\delta_i$ and
$\delta_j$ are disjoint, it is not hard to see that
$d(\overline{\delta}_i,\overline{\delta}_j)\approx 0$ for all $i,j$.
Hence $d(\overline{\delta}_j,\gamma_0)\approx 0$ for any $j$. Acting
by $h$, we have $d(h(\overline{\delta}_j),h(\gamma_0))\approx 0$.
Since $d(h(\gamma_0),\gamma_0)=N\gg 0$, we have
$d(h(\overline{\delta}_j),\gamma_0)\approx N$ for all $j$. This, in
turn, implies that $i_S(h(\delta_j),\gamma)\gtrapprox {N\over 2}$
and $i_{S'}(h(\delta),\gamma')\gtrapprox {N\over 2} k\gg 2k$. Since
$i_{S'}(h'(\delta),\gamma')=i_{S'}(h(\delta),\gamma')$, we have a
contradiction and $\delta\not=h'(\delta)$.

\begin{figure}[ht]
\begin{overpic}[height=7cm]{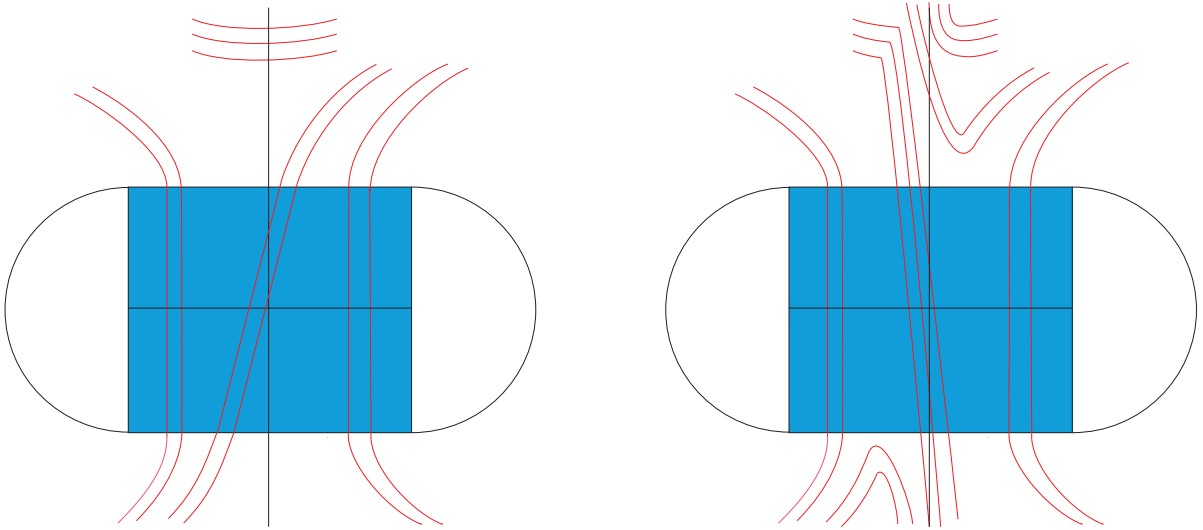}
\end{overpic}
\caption{The case where some $(h(\delta))_i'$ have positive slope.
The left-hand diagram depicts $h(\delta)$ and the right-hand diagram
depicts $R_{\gamma'}\circ h(\delta)$. The shaded region is the
1-handle $B$. In the picture, $k=6$, $m=2$, and $n=3$ for
$h(\delta)$.} \label{positive}
\end{figure}

Now that we know $h'$ is not reducible, it remains to show that $h'$
is not periodic, i.e., there is some $\delta$ such that
$(h')^j(\delta)\not=\delta$ for any $j\in \N$. For simplicity, take
$\delta=\bdry S$.  Normalizing with respect to $B$ and $\gamma$ as
before, $h'(\delta)=R_{\gamma'}(\delta)$ has $k_1=2$ intersections
with $a$ and $n_1=0$ intersections with $\gamma'$ away from $B$.
(Clearly, $h'(\delta)\not=\delta$.)  Suppose inductively that
$(h')^i(\delta)$ has $k_i$ intersections with $a$ and $n_i$
intersections with $\gamma'$ away from $B$, and that ${n_i\over
k_i}\ll 1$ and $k_i\rightarrow\infty$. Then, for each component
$((h')^i(\delta))_j$ of $(h')^i(\delta) \cap S$, we have
$d((\overline{(h')^i(\delta)})_j,\gamma_0)\approx 0$ and
$d(h((\overline{(h')^i(\delta)})_j),\gamma_0)\approx N$ for all $j$.
Hence $(h\circ (h')^i)(\delta)$ has $k_i$ intersections with $a$ and
$\gtrapprox {N\over 2}k_i$ intersections with $\gamma'$ away from
$B$. Using the same method as in the previous paragraphs,
$(h')^{i+1}(\delta)$ has  $\gtrapprox{N\over 2}k_i$ intersections
with $a$ and at most $k_i$ intersections with $\gamma'$ away from
$B$. Hence ${n_{i+1}\over k_{i+1}}\ll 1$, $k_{i+1}> k_i$, and
$(h')^i(\delta)\not= \delta$ for all $i\in\N$.

\s\n {\bf Case 2: $\bdry S$ has two components.}  Suppose that
$\bdry S$ has two components.

Let $\gamma$ be an arc which connects the two components of $\bdry
S$.  We assign to $\gamma$ a closed curve $\alpha_\gamma$ as
follows: Take a pair-of-pants neighborhood $N\subset S$ of
$\gamma\cup \bdry S$. Then let $\alpha=\alpha_\gamma$ be the
component of $\bdry N$ which is not a subset of $\bdry S$. On the
other hand, given a closed curve $\alpha$ which separates off a
pair-of-pants $N$, we can recover $\gamma$, provided we allow the
endpoints of $\gamma$ to freely move on $\bdry S$.

We explain how to pick a suitable arc of stabilization $\gamma$
which connects the two components of $\bdry S$.  Let $\gamma_0$ be
any arc which connects the two components of $\bdry S$, and
$\alpha_0=\alpha_{\gamma_0}$. Let $\mu$ be a stable lamination of a
pseudo-Anosov $g$, where $h$ is not freely homotopic to $g$. Then
the sequence of iterates $g^i(\alpha_0)$ converges to $\mu$ as
$i\rightarrow \infty$. Hence, by Lemma~\ref{lemma:findgamma}, there
exists $\alpha=g^n(\alpha_0)$, $n\gg 0$, so that $d(\alpha,
h(\alpha))=N\gg 0$. Now let $\gamma$ be the arc for which
$\alpha_\gamma=\alpha$. Then $i_S(\gamma,h(\gamma))$ and
$i_S(\alpha,h(\alpha))$ are roughly proportional, with a
proportionality factor of $4$. Here we are freely homotoping
$\gamma$ and $h(\gamma)$ along $\bdry S$ so their minimal
intersection number is realized.

Once we have picked $\gamma$ so that $\alpha_\gamma$ satisfies
$d(\alpha_\gamma,h(\alpha_\gamma))=N$, the rest of the proof is
almost identical to the case where $\bdry S$ has one boundary
component. The only difference is the definition of
$\overline{\delta}_i$, when $\delta_i$ has endpoints on the distinct
components of $\bdry S$.  In this case we set $\overline{\delta}_i=
\alpha_{\delta_i}$, as defined above.

\s\n This completes the proof of Theorem~\ref{thm:stabilization}.
\hfill$\Box$

\s Using the same techniques we can prove the following:

\begin{cor}
If $\bdry S$ is connected, $h$ is pseudo-Anosov, and the fractional
Dehn twist coefficient $c>2$, then any elementary stabilization
along a non-boundary-parallel arc $\gamma\subset S$ is
pseudo-Anosov.
\end{cor}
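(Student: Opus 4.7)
The approach is to follow Case~1 of the proof of Theorem~\ref{thm:stabilization} essentially verbatim, with the hypothesis $c>2$ on the fractional Dehn twist coefficient playing the role that the careful choice of $\gamma$ played in that proof. Indeed, the only place where Case~1 exploited its freedom to pick $\gamma$ was in arranging that $d(\gamma_0, h(\gamma_0)) = N\gg 0$, where $\gamma_0$ is obtained from $\gamma$ by concatenating with a subarc of $\bdry S$. Once that single distance estimate is in hand, the entire argument showing that $h' = R_{\gamma'}\circ h$ is pseudo-Anosov proceeds independently of the particular $\gamma$.

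The first step is therefore to replace Lemma~\ref{lemma:findgamma} with an analogue that requires no choice: for $h$ pseudo-Anosov with FDT $c$, every non-boundary-parallel arc $\gamma\subset S$ yields a $\gamma_0$ with $d(\gamma_0, h(\gamma_0))$ bounded below by an increasing function of $c$, and the assumption $c>2$ provides exactly the threshold needed to clear the numerical constants appearing implicitly in Case~1 (these constants come from applications of Lemma~\ref{lemma:intersection}, in particular from the bound $d\leq 7$ whenever $i_S\leq 3$). Heuristically, the FDT measures how much $h$ twists along $\bdry S$; since $\gamma_0$ contains an arc of $\bdry S$, this twisting separates $\gamma_0$ from $h(\gamma_0)$ in the curve complex, and pseudo-Anosov-ness of $h$ converts the separation into a genuine large distance rather than merely large intersection.

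Granting such a lower bound on $d(\gamma_0, h(\gamma_0))$, the rest of the proof is just a re-run of Case~1. To verify irreducibility of $h'$, one fixes a non-peripheral multicurve $\delta\subset S'$ and splits into the cases $\delta\subset S$ and $\delta\not\subset S$, the latter further splitting according to the sign of the slopes of the arcs $(h(\delta))_i'$ in the $1$-handle $B$. In each sub-case, $\delta$ is distinguished from $h'(\delta)$ by comparing either $i_{S'}(\cdot, a)$ or $i_{S'}(\cdot,\gamma')$, with the distance bound driving the crucial ``$d(\overline{\delta}_j,\gamma_0)\approx 0$ versus $d(h(\overline{\delta}_j),\gamma_0)\approx N$'' dichotomy that produces the contradictions. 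The inductive bookkeeping on $k_i = i_{S'}((h')^i(\bdry S),a)$ and $n_i = i_{S'}((h')^i(\bdry S),\gamma')$ then rules out periodicity exactly as in Case~1.

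The main obstacle is the first step: converting the FDT hypothesis $c>2$ into a quantitative lower bound on $d(\gamma_0, h(\gamma_0))$ sharp enough to dominate every fixed constant appearing in the Case~1 estimates. One should expect this to be the delicate part, since the ``$\approx$'' arguments in Case~1 hide numerical inputs that must be recorded and compared to a bound on $d(\gamma_0,h(\gamma_0))$ derived from $c$; identifying $c>2$ as the precise threshold (rather than, say, $c>1$ or some larger constant) amounts to a bookkeeping check of those estimates.
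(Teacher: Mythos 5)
Your plan hinges on a first step that is not merely delicate but false: the hypothesis $c>2$ cannot be converted into a lower bound on $d(\gamma_0,h(\gamma_0))$, let alone one that grows with $c$. The fractional Dehn twist coefficient only records twisting of $h$ near $\bdry S$ relative to its pseudo-Anosov representative $\psi$; since any simple closed curve can be isotoped off the boundary collar where that twisting is supported, the action of $h$ on $\mathcal{C}_0(S)$ depends only on the free homotopy class of $h$, i.e.\ only on $\psi$, and not on $c$ at all. Two lifts of the same $\psi$ to diffeomorphisms fixing $\bdry S$ pointwise, with arbitrarily different values of $c$, move every closed curve $\gamma_0$ by exactly the same curve-complex distance. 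Moreover, the corollary must hold for \emph{every} non-boundary-parallel arc $\gamma$, and for a fixed $\psi$ there is nothing preventing $d(\gamma_0,h(\gamma_0))$ from being a small constant (say $3$ or $4$) for the particular $\gamma_0$ arising from your arc; so the $N\gg 0$ that drives all the ``$\approx$'' estimates of Case~1 is simply unavailable, and the Case~1 machinery cannot be re-run.

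The paper's proof takes a different, non-curve-complex route: it normalizes $\delta$ with respect to the handle $B$ as before, but then works in the universal cover, counting intersections $n(\delta_i,p)$ of a lift of each arc $\delta_i$ with the lifts of $\gamma$ emanating from a fixed boundary line $L$. The hypothesis $c>2$ enters through right-veering, not through distance: writing $g=(R_{\bdry S})^2$, the map $h\circ g^{-1}$ is pseudo-Anosov with fractional Dehn twist coefficient $c-2>0$, hence right-veering, so $h(\delta_i)$ lies to the right of $g(\delta_i)$ and must wrap around the boundary at least as much, giving $n((h(\delta))_i,q)\geq 4$ when $(\delta_i,p)$ is to the right of $\gamma$ and $n((h(\delta))_i,q)+n(\delta_i,p)>3$ in general. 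Summing over the $2k$ endpoints yields $m+m'>6k$ for the intersection counts of $\delta$ and $h(\delta)$ with $\gamma$ outside $B$, while $i_{S'}(\delta,a)=i_{S'}(h'(\delta),a)$ would force $m'\leq k$ and hence $m>5k$, contradicting $i_{S'}(\delta,\gamma')=i_{S'}(h'(\delta),\gamma')$. This is where the threshold $2$ comes from (it is the amount of boundary twisting needed to make $h\circ g^{-1}$ right-veering with a margin), not from the constants hidden in Lemma~\ref{lemma:intersection}. To repair your argument you would need to abandon the distance-based dichotomy and substitute this kind of quantitative right-veering/twisting estimate.
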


The {\em fractional Dehn twist coefficient $c$} of a pseudo-Anosov
$h$ is defined as follows:  Let $H:S\times[0,1]\rightarrow S$ be the
free homotopy from $h(x)=H(x,0)$ to its pseudo-Anosov representative
$\psi(x)= H(x,1)$.  Define $\beta:\bdry S\times[0,1]\rightarrow
\bdry S\times[0,1]$ by sending $(x,t)\mapsto (H(x,t),t)$, i.e.,
$\beta$ is the trace of the isotopy $H$ along $\bdry S$. If we
choose an oriented identification $\bdry S\simeq \R/\Z$, then we can
lift $\beta$ to $\tilde\beta: \R\times[0,1]\rightarrow
\R\times[0,1]$ and set $f(x)=\tilde\beta(x,1)-\tilde\beta(x,0)+x$.
The {\em fractional Dehn twist coefficient} $c\in\Q$ is the rotation
number of $f$, i.e., $$c=\lim_{n\rightarrow \infty} {f^n(x)-x\over
n},$$ for any $x$.

The inequality $c>2$ is certainly not optimal, but we will not
pursue the optimal constant here.

\begin{proof}
We use the same notation as in the proof of
Theorem~\ref{thm:stabilization}, and highlight only the differences.
If $\delta\subset S$, then $h(\delta)\not= \delta$ since $h$ is
pseudo-Anosov.  If $h(\delta)$ does not intersect $\gamma'$, then
$h'(\delta)=h(\delta)\not=\delta$.  Otherwise,
$i_{S'}(h'(\delta),a)>0$ whereas $i_{S'}(\delta,a)=0$.

If $\delta\not\subset S$, then we normalize $\delta$ with respect to
$B$ as in the proof of Theorem~\ref{thm:stabilization} and consider
the arcs $\delta_i$ of $S$; the $\delta_i$ are
non-boundary-parallel. Let $\pi:\widetilde S\rightarrow S$ be the
universal covering map. Fix a connected component of $\pi^{-1}(\bdry
S)$, which we call $L$. Let $\widetilde\gamma_j$, $j\in \Z$, be the
preimages of $\gamma$ which have an endpoint on $L$. If $p$ is an
endpoint of $\delta_i$, denote by $n(\delta_i,p)$ the geometric
intersection number, relative to the endpoints, of $\cup_j
\widetilde\gamma_j$ and a lift of $\delta_i$ which has an endpoint
on $\pi^{-1}(p)\cap L$.

Next, given the pair $(\delta_i,p)$ and a component of $\bdry S\cap
\bdry B$ which contains $p$, we define what it means for
$(\delta_i,p)$ to be to the left or to the right of $\gamma$. Using
the boundary orientation for $\bdry S$, if (the relevant component
of) $\bdry S\cap \bdry B$ intersects $p$ before it intersects
$\gamma$, then $(\delta_i,p)$ is {\em to the left of $\gamma$};
otherwise, $(\delta_i,p)$ is {\em to the right of $\gamma$}.
(Remember that the $\delta_i$ are normalized with respect to
$\gamma'$.)

If $(\delta_i,p)$ is to the right of $\gamma$, then we claim that
$n((h(\delta))_i,q)\geq 4$, where $q$ is an endpoint of
$(h(\delta))_i$. To see this, let $g=(R_{\bdry S})^2$. The arc
$g(\delta_i)$ satisfies $n(g(\delta_i),p)=4$.  Since $c>2$, $h\circ
g^{-1}$ is pseudo-Anosov with fractional Dehn twist coefficient
$c-2>0$, and hence is right-veering. Therefore $h(\delta_i)$ is to
the right of $g(\delta_i)$, and $n(h(\delta_i),p)\geq
n(g(\delta_i,p))=4$.  When $(\delta_i,p)$ is to the right of
$\gamma$, we may take $(h(\delta))_i=h(\delta_i)$ and $p=q$. This
proves $n((h(\delta))_i,q)\geq 4$.  On the other hand, if
$(\delta_i,p)$ is to the left of $\gamma$, then a similar
calculation yields
\begin{equation}\label{equation:sum}
n((h(\delta))_i,q)+n(\delta_i,p) > 3.
\end{equation}
In either case, Equation~\ref{equation:sum} is satisfied. This means
that, if $k$ is the number of components of $S\cap \delta$ and
$m,m'$ are the number of intersections of $\delta$, $h(\delta)$ with
$\gamma$ outside of $B$, then
$$m+m'\geq \sum ( n((h(\delta))_i,q)+n(\delta_i,p))> 6k.$$
(Remember that each $\delta_i$ has two endpoints.) In order for
$i_{S'}(\delta,a)=i_{S'}(h'(\delta),a)$, we require $m'\leq k$. This
implies that $m>5k$, which contradicts
$i_{S'}(\delta,\gamma')=i_{S'}(h'(\delta),\gamma')$.

We have shown that $h'$ is not reducible.  To show that $h'$ is not
periodic, consider $\delta=\bdry S$.  One easily verifies that the
number of intersections with $a$ increases with each iterate of
$h'$.
\end{proof}

\s\n {\em Acknowledgements.} We thank Yair Minsky for helpful e-mail
correspondence.


\begin{thebibliography}{}


\bibitem[AS]{AS}
A.\ Abrams and S.\ Schleimer, \textit{Distances of Heegaard
splittings}, Geom.\ Topol.\ {\bf 9} (2005), 95--110.

\bibitem[Bo]{Bo}
F.\ Bonahon, \textit{Closed Curves on Surfaces}, monograph in
progress.

\bibitem[CB]{CB}
A.\ Casson and S.\ Bleiler, \textit{Automorphisms of Surfaces after
Nielsen and Thurston}, London Mathematical Society Student Texts
{\bf 9}, Cambridge University Press, Cambridge, 1988.

\bibitem[FLP]{FLP}
A.\ Fathi, F.\ Laudenbach and V.\ Poenaru, \textit{Travaux de
Thurston sur les surfaces}, Ast\'erisque \textbf{66-67}, Soci\'et\'e
Math\'ematique de France (1991/1971).

\bibitem[Gi]{Gi}
E.\ Giroux, \textit{G\'eom\'etrie de contact:\ de la dimension trois
vers les dimensions sup\'erieures}, Proceedings of the International
Congress of Mathematicians, Vol.\ II (Beijing, 2002),  405--414,
Higher Ed.\ Press, Beijing, 2002.

\bibitem[Har]{Har}
W.\ Harvey, \textit{Boundary structure of the modular group},
Riemann surfaces and related topics: Proceedings of the 1978 Stony
Brook Conference (State Univ.\ New York, Stony Brook, N.Y., 1978),
pp.\ 245--251, Ann.\ of Math.\ Stud., 97, Princeton Univ. Press,
Princeton, N.J., 1981.

\bibitem[He]{He}
J.\ Hempel, \textit{3-manifolds as viewed from the curve complex},
Topology {\bf 40} (2001), 631--657.

\bibitem[HKM]{HKM}
K.\ Honda, W.\ Kazez and G.\ Mati\'c, \textit{Right-veering
diffeomorphisms of compact surfaces with boundary}, Invent.\ Math.,
to appear.

\bibitem[Kl]{Kl}
E.\ Klarreich, \textit{The boundary at infinity of the curve complex
and the relative Teichm\"uller space}, preprint 1999.
\texttt{http://www.nasw.org/users/klarreich/publications.htm}.

\bibitem[Ko]{Ko}
T.\ Kobayashi, \textit{Heights of simple loops and pseudo-Anosov
homeomorphisms},  Braids (Santa Cruz, CA, 1986),  327--338,
Contemp.\ Math., 78, Amer.\ Math.\ Soc., Providence, RI, 1988.

\bibitem[MM]{MM}
H.\ Masur and Y.\ Minsky, \textit{Geometry of the complex of curves
I: Hyperbolicity}, Invent.\ Math.\ {\bf 138}  (1999), 103--149.

\bibitem[Th]{Th}
W.\ Thurston, \textit{On the geometry and dynamics of
diffeomorphisms of surfaces}, Bull.\ Amer.\ Math.\ Soc.\ (N.S.) {\bf
19} (1988), 417--431.

\end{thebibliography}
\end{document}